\documentclass{article}

\usepackage{mathtools}
\usepackage{amssymb}
\usepackage{amsthm}
\usepackage{mytheorems}
\usepackage{mymacros}

\theoremstyle{plain}
\newcommand{\thmDT}{Dickson's Theorem}
\newtheorem*{TheoremDT}{\thmDT}

\begin{document}

\title{\bf On a theorem of Dickson}

\author{Paul Flavell}

\maketitle

\section{Introduction} \label{intro}

The purpose of this paper is to give a new proof of the following result.
Another new proof is given in \cite{F1}.

\begin{TheoremDT}[\cite{D}] \label{thmDT}
    Suppose that $\openF$ is a finite field with odd characteristic
    and generator $l$.
    Set \[
        G = \bblistgen{ \PMatrixC{1 & 0 \\ 1 & 1}, \PMatrixC{1 & l \\ 0 & 1} }.
    \]
    Then either
    \begin{itemize}
        \item[(a)]  $G = \spl{2}{\openF}$ or

        \item[(b)]  $\card{\openF} = 9, l^{2} = -1$ and
                    $G$ is isomorphic to the double cover of $\alt{5}$.
    \end{itemize}
\end{TheoremDT}

\noindent Gorenstein~\cite[Theorem~8.4, p.44]{G} gives an account of Dickson's proof
in modern terminology.
Dickson's Theorem is in fact part of a larger work of Dickson that determines the
subgroups of the groups $\psl{2}{q}$.
Alternative treatments of this work are given by Suzuki
and by Huppert and Blackburn.
Dickson's Theorem is deduced as a corollary.
See \cite[6.21, p.409]{S}, \cite[7.6, p.494]{HB} and \cite[8.27, p.213]{H}.
This is also the approach taken in \cite{F1}.

The exception in (b) follows without much difficulty from
the isomorphism $\psl{2}{9} \isom \alt{6}$.
This does beg the question as to how the exception manifests itself
in any proof of Dickson's Theorem.
In the existing proofs and in \cite{F1},
it emerges out of counting arguments.
The way it emerges in the present paper is entirely different.

\section{Basics} \label{basics}
Throughout this paper \[
    \mbox{$\openF$ is a finite field with odd characteristic}.
\]
The \emph{projective line} over $\openF$ is
the set of $1$-dimensional subspaces of $\openF^{2}$
and is denoted by $\pl{\openF}$.
For each $(x,y) \in \openF^{2} \setminus{ \listset{(0,0)} }$
let $[x:y]$ denote the $1$-dimensional subspace spanned by $(x,y)$.
Then $[x:y] = [x':y']$ if and only if $xy' = yx'$.
Consequently \[
    \pl{\openF} = \set{ [x:1] }{ x \in \openF } \cup \listset{ [1:0] }.
\]
The group $\gl{2}{\openF}$ acts by right multiplication
on the $\openF$-vectorspace $\openF^{2}$.
This gives an action of $\gl{2}{\openF}$ on $\pl{\openF}$.
This action has kernel $\zenter{\gl{2}{\openF}}$.
Thus $\pgl{2}{\openF}$ acts faithfully on $\pl{\openF}$.

On occasion it is better to work in $\pgl{2}{\openF}$.
We thus use the \emph{bar convention}
namely if $G \leq \gl{2}{\openF}$ then \[
    \mbox{ $\BAR{G}$ denotes the image of $G$ in $\pgl{2}{\openF}$.}
\]
Thus $\BAR{G} \not= 1$ if and only if $G$ acts nontrivially on $\pl{\openF}$.

We use the standard subscript notation $G_{\alpha}$ and $G_{\alpha\beta}$ to
denote $1$-point and $2$-point stabilizers respectively.

Let $\alpha \in \pl{\openF}$.
The stabilizer $\gl{2}{\openF}_{\alpha}$ acts on the subspace $\alpha$
and the quotient space $\openF^{2} / \alpha$.
Define \[
    \unip{\alpha}{\openF} = %
        \set{ g \in \gl{2}{\openF}_{\alpha} }{%
            \mbox{$g$ acts trivially on $\alpha$ %
                    and on $\openF^{2} / \alpha$
            }
        }.
\]
Then $\unip{\alpha}{\openF} \normal \gl{2}{\openF}_{\alpha}$.
In order to study $\unip{\alpha}{\openF}$ we may,
by the transitivity of $\gl{2}{\openF}$ on $\pl{\openF}$,
suppose that $\alpha = [1:0]$.
Let $g \in \gl{2}{\openF}_{\alpha}$ so that \[
    g = \PMatrixC{ a & 0 \\ c & d }
\]
for some $a,d \in \openF \setminus \listset{0}$ and $c \in \openF$.
Now $g$ acts as
multiplication by $a$ on $\alpha$ and
as multiplication by $d$ on $\openF^{2} / \alpha$.
Hence \[
    \unip{\alpha}{\openF} =%
        \bbset{ \PMatrixC{ 1 & 0 \\ c & 1} }{ c \in \openF}.
\]
Then $\unip{\alpha}{\openF} \isom \openF\additive$ and $\unip{\alpha}{\openF} \leq \spl{2}{\openF}$.
Also $[0:1]\PSmallMatrixC{ 1 & 0 \\ c & 1 } = [c:1]$
so the action of $\unip{\alpha}{\openF}$ on
$\pl{\openF} \setminus \listset{\alpha}$ is regular.
This implies that $\spl{2}{\openF}$ is $2$-transitive on $\pl{\openF}$.
Every element of $\BAR{\unip{\alpha}{\openF}}\nonid$ is
fixed point free on $\pl{\openF} \setminus \listset{ \alpha }$.
By contrast if $\BAR{g} \not\in \BAR{\unip{\alpha}{\openF}}$ then
$\BAR{g}$ has a unique fixed point on $\pl{\openF} \setminus \listset{\alpha}$
namely $[ (d - a)^{-1}c : 1 ]$.

The subgroups $\unip{\alpha}{\openF}$ are the maximal unipotent subgroups
of $\gl{2}{\openF}$ as well as being the Sylow $\Char\openF$-subgroups
of $\gl{2}{\openF}$ and of $\spl{2}{\openF}$.

\section{Preliminaries} \label{prel}

\begin{Lemma} \label{prel.1}
    \[
        \spl{2}{\openF} = \bbgen{
            \PMatrixC{ 1 & 0 \\ 1 & 1},
            \PMatrixC{ 1 & f \\ 0 & 1}
        }{ f \in \openF}.
    \]
\end{Lemma}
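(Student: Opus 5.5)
Let $H$ denote the group generated by $\PSmallMatrixC{1 & 0 \\ 1 & 1}$ and all $\PSmallMatrixC{1 & f \\ 0 & 1}$ with $f \in \openF$. Clearly $H \leq \spl{2}{\openF}$. For the reverse inclusion I would show that $H$ contains both root groups $\unip{[1:0]}{\openF}$ and $\unip{[0:1]}{\openF}$, and then use the standard fact that these two groups generate $\spl{2}{\openF}$.

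\emph{Step 1: the second root group.} The lower unitriangular matrices form $\unip{[1:0]}{\openF}$. By hypothesis the upper unitriangular matrices $\PSmallMatrixC{1 & f \\ 0 & 1}$ all lie in $H$, and they form the root group $\unip{[0:1]}{\openF}$. Set $w = \PSmallMatrixC{1&0\\1&1}\PSmallMatrixC{1&-1\\0&1}\PSmallMatrixC{1&0\\1&1}$. A direct computation shows $w = \PSmallMatrixC{0&-1\\1&0}$. Conjugating $\PSmallMatrixC{1&f\\0&1}$ by $w$ gives $\PSmallMatrixC{1&0\\-f&1}$. Hence $\unip{[1:0]}{\openF} \leq H$.

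\emph{Step 2: generation.} Let $g = \PSmallMatrixC{a&b\\c&d} \in \spl{2}{\openF}$. I would reduce $g$ to the identity by multiplying by elements of the two root groups.
\begin{itemize}
    \item If $b \neq 0$, right multiplication by $\PSmallMatrixC{1&0\\t&1}$ changes the $(1,1)$ entry to $a + bt$. Choosing $t$ suitably makes it equal to $1$.
    \item If $b = 0$, then $a \neq 0$. First multiply by an upper unitriangular matrix to make $b \neq 0$, then apply the previous case.
\end{itemize}
Once the $(1,1)$ entry is $1$, clear the $(1,2)$ entry with an upper unitriangular matrix and the $(2,1)$ entry with a lower one. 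The result is $\PSmallMatrixC{1&0\\0&\delta}$, and the determinant condition forces $\delta = 1$. So $g \in H$.

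Alternatively, one can argue via the action on $\pl{\openF}$: $H$ is transitive, the stabilizer of $[1:0]$ in $\spl{2}{\openF}$ is lower triangular, and the diagonal elements $\mathrm{diag}(\lambda,\lambda^{-1})$ are products of root elements by the usual $w(\lambda)w(1)^{-1}$ trick. The direct row-reduction route is the cleaner of the two.

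The main obstacle is only the bookkeeping in Step 2, and in particular the case $b = 0$, which the preliminary upper unitriangular multiplication handles. Nothing beyond explicit matrix computation is needed, and odd characteristic plays no role.
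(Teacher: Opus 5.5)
Your proposal is correct and is essentially the paper's proof, which consists of the single line ``by elementary row and column operations.'' Your Step 1 (conjugating the upper unitriangular generators by $w$ to obtain every lower unitriangular matrix) and your reduction in Step 2 simply carry out those row and column operations explicitly.
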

\begin{proof}
    By elementary row and column operations.
\end{proof}

\begin{Lemma} \label{prel.2}
    Let $\openE_{0} \subseteq \openE \subseteq \openF$
    where $\openE_{0}$ and $\openE$ are subgroups of $\openF\additive$.
    Suppose that $e^{2} \in \openE$ whenever $e \in \openE \setminus \openE_{0}$.
    Then $\openE_{0} = \openE$ or $\openE$ is a subfield of $\openF$.
\end{Lemma}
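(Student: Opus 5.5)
Assume $\openE_{0} \not= \openE$. The plan is to show that $\openE$ is closed under multiplication, and then use finiteness to conclude that it is a subfield. First I will upgrade the hypothesis to: $e^{2} \in \openE$ for \emph{every} $e \in \openE$. After that, polarization gives closure under products.

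\emph{Step 1: squares of elements of $\openE_{0}$.} Fix $f \in \openE \setminus \openE_{0}$. Such an $f$ exists by assumption, and $f \not= 0$. Let $e \in \openE_{0}$. Since $\openE_{0}$ is a subgroup, both $f+e$ and $f-e$ lie in $\openE \setminus \openE_{0}$. Hence $f^{2}$, $(f+e)^{2}$ and $(f-e)^{2}$ all lie in $\openE$. Adding the last two gives $2f^{2} + 2e^{2} \in \openE$, and therefore $2e^{2} \in \openE$.

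Now $\openF\additive$ is an elementary abelian $p$-group with $p = \Char\openF$ odd. So every subgroup of $\openF\additive$ is closed under multiplication by integers, and multiplying by an integer $m$ with $2m \equiv 1 \pmod p$ acts as multiplication by $2^{-1}$. Thus $e^{2} \in \openE$. Combined with the hypothesis, $e^{2} \in \openE$ for all $e \in \openE$.

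\emph{Step 2: closure under multiplication.} For $x,y \in \openE$ we have $xy = 2^{-1}\bigl((x+y)^{2} - x^{2} - y^{2}\bigr)$. By Step 1 and the same remark about $2^{-1}$, this lies in $\openE$. So $\openE$ is a subring (possibly without identity) of $\openF$, and it contains the nonzero element $f$.

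\emph{Step 3: finiteness.} Let $x \in \openE \setminus \listset{0}$. The powers $x, x^{2}, \ldots$ all lie in $\openE$ and are nonzero, since $\openF$ is a field. Because $\openF$ is finite, $x^{n} = 1$ for some $n \geq 1$. Hence $1 \in \openE$, and $x^{-1} = x^{n-1} \in \openE$. Therefore $\openE$ is a subfield of $\openF$.

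The only delicate point is Step 1, namely obtaining $e^{2}$ for $e \in \openE_{0}$, where the hypothesis says nothing directly. The trick is to shift $e$ by the fixed element $f \in \openE \setminus \openE_{0}$ in both directions. Dividing by $2$ inside the additive group then uses exactly that the characteristic is odd.
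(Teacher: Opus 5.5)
Your proof is correct and follows essentially the same route as the paper. Both shift an element of $\openE_{0}$ in both directions by a fixed element of $\openE \setminus \openE_{0}$, use odd characteristic to get squares of elements of $\openE_{0}$ into $\openE$, and then polarize to obtain closure under multiplication. Your Step~3 just spells out the finiteness argument, including that $\openE$ contains a nonzero element, which the paper compresses into the single remark that closure under multiplication suffices.
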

\begin{proof}
    We assume that $\openE_{0} \not= \openE$.
    Since $\openF$ is finite it suffices to show that $\openE$
    is closed under multiplication.
    Let $e \in \openE \setminus \openE_{0}$ and $f \in \openE_{0}$.
    Then $e \pm f \not\in \openE_{0}$ so \[
        2f^{2} = (e + f)^{2} + (e - f)^{2} - 2e^{2} \in \openE.
    \]
    Since $\openF$ has odd characteristic and $\openE \leq \openF\additive$
    this forces $f^{2} \in \openE$.
    We deduce that $e^{2} \in \openE$ for all $e \in \openE$.
    Finally let $e,f \in \openE$ be arbitrary.
    Then \[
        2ef = (e + f)^{2} - e^{2} - f^{2} \in \openE
    \]
    and again $ef \in \openE$,
    completing the proof.
\end{proof}


\noindent The following lemma follows readily from the conjugacy of Frobenius complements.
We give one of several elementary proofs.

\begin{Lemma} \label{prel.3}
    Let $N \leq \gl{2}{\openF}$ with $\zenter{\gl{2}{\openF}} \leq N$.
    Let $\alpha \in \pl{\openF}$,
    set \[
        \Sigma = \bset{  \beta \in \pl{\openF} \setminus \listset{ \alpha } }{
                    \BAR{N}_{\alpha\beta} \not= 1
                 }
    \]
    and suppose that $\Sigma \not= \emptyset$.
    Then $N \cap \unip{\alpha}{\openF}$ acts transitively on $\Sigma$.
\end{Lemma}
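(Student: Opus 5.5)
The plan is to reduce to affine maps of $\openF$ and show that the translations coming from $N \cap \unip{\alpha}{\openF}$ form a subgroup $V$ of $\openF\additive$ that contains every point of $\Sigma$ (in a suitable coordinate). I will assume, as the statement implicitly requires, that $\Sigma$ is computed using elements of $N$ that fix $\alpha$, so that everything happens inside $\BAR{N}_{\alpha}$.

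\emph{Coordinates.} Since $\gl{2}{\openF}$ is $2$-transitive and conjugation respects all the hypotheses, we may take $\alpha = [1:0]$ and a fixed $\beta \in \Sigma$ to be $[0:1]$. Identify $\pl{\openF}\setminus\listset{\alpha}$ with $\openF$ via $[x:1] \leftrightarrow x$. Then $\gl{2}{\openF}_{\alpha}$ consists of the lower triangular matrices $\PSmallMatrixC{a & 0 \\ c & d}$, and by the computation in Section~\ref{basics} each acts as an affine map $x \mapsto \lambda x + t$, where $\lambda = a/d$ and $t$ is a multiple of $c$. The kernel of this action is the group of scalars, and the translations are exactly the images of $\zenter{\gl{2}{\openF}}\,\unip{\alpha}{\openF}$.

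Let $V = \set{t \in \openF}{x \mapsto x+t \text{ lies in } \BAR{N}_{\alpha}}$. This is an additive subgroup of $\openF$.

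\emph{Lifting translations.} If a translation lies in $\BAR{N}$, it lifts to an element $zu \in N$ with $z$ scalar and $u \in \unip{\alpha}{\openF}$. Since $\zenter{\gl{2}{\openF}} \le N$, we get $u \in N \cap \unip{\alpha}{\openF}$. Hence it suffices to show that $\Sigma \subseteq V$, that is, that for every $\gamma \in \Sigma$ the translation by $\gamma$, which sends $\beta = 0$ to $\gamma$, lies in $\BAR{N}$.

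\emph{Key computation.} Pick $\BAR{h} \in \BAR{N}_{\alpha\beta}\nonid$. Then $\BAR{h}\colon x \mapsto \lambda x$ with $\lambda \neq 1$. For $\gamma \in \Sigma$ pick $\BAR{g} \in \BAR{N}_{\alpha\gamma}\nonid$, so $\BAR{g}\colon x \mapsto \mu(x-\gamma)+\gamma$ with $\mu \ne 1$.

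The commutator of $\BAR{g}$ and $\BAR{h}$ has trivial linear part, so it is a translation. A direct evaluation at $0$ shows it is translation by
\[
\tau = \gamma(\lambda - 1)(\mu - 1),
\]
up to a sign and a power of $\lambda$ or $\mu$, depending on the order convention. Thus $\tau \in V$.

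Conjugating a translation by $t$ with an affine map of linear part $\nu$ gives translation by $\nu t$ (or $\nu^{-1}t$). So $V$ is closed under multiplication by $\lambda$ and by $\mu$. Consequently $V$ is a module over the subrings $\mathbb{F}_{p}[\lambda]$ and $\mathbb{F}_{p}[\mu]$ of $\openF$, and these are subfields because $\openF$ is finite. Since $\lambda - 1$ is a nonzero element of the field $\mathbb{F}_{p}[\lambda]$, its inverse lies there too, so $\gamma(\mu-1) \in V$. Similarly $(\mu-1)^{-1} \in \mathbb{F}_{p}[\mu]$, so $\gamma \in V$. The stray sign and powers of $\lambda,\mu$ are absorbed in the same way.

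\emph{Conclusion.} Every $\gamma \in \Sigma$ is the image of $\beta$ under an element of $N \cap \unip{\alpha}{\openF}$. This gives transitivity on $\Sigma$, and also shows that $\Sigma$ is an orbit, since $N\cap\unip{\alpha}{\openF}$ normalizes $\BAR{N}_{\alpha}$ and so preserves $\Sigma$.

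The main obstacle is the key computation: turning the commutator translation $\gamma(\lambda-1)(\mu-1)$ into translation by exactly $\gamma$. The field-closure trick via $\mathbb{F}_p[\lambda]$ and $\mathbb{F}_p[\mu]$ is what I would rely on there, taking care with the left/right action conventions.
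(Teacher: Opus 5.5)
Your proof is correct, but it takes a genuinely different route from the paper.

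\textbf{The paper's argument.} It never introduces coordinates. It applies the Orbit-Counting Lemma to $\BAR{N}_{\alpha}$ acting on $\pl{\openF}\setminus\{\alpha\}$. The only input is the fixed-point pattern in $\pgl{2}{\openF}_{\alpha}$:
\begin{itemize}
\item nontrivial elements of $\BAR{\unip{\alpha}{\openF}}$ are fixed-point-free on $\pl{\openF}\setminus\{\alpha\}$;
\item every other element has exactly one fixed point there.
\end{itemize}
This gives $(n-1)\card{\BAR{N}_{\alpha}} = \card{\Sigma} - \card{\BAR{U}}$, where $n$ is the number of nonregular orbits. The bound $\card{\Sigma} \le \frac12 n\card{\BAR{N}_{\alpha}}$ then forces $n = 1$ and $\card{\Sigma} = \card{\BAR{U}}$. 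Semiregularity of $\BAR{U}$ finishes the proof.

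\textbf{Your argument.} You identify $\pgl{2}{\openF}_{\alpha}$ with the affine group of $\openF$ and build the required unipotent element explicitly. The commutator of $\bar g$ and $\bar h$ is translation by $\gamma(\lambda-1)(\mu-1)$, up to a factor $\pm\lambda^{i}\mu^{j}$ depending on convention; I checked this. Conjugation by elements of $\BAR{N}_{\alpha}$ scales translations by their linear parts. Hence the translation group $V$ is a vector space over the subfields $\mathbb{F}_{p}[\lambda]$ and $\mathbb{F}_{p}[\mu]$, and you can divide out $\lambda-1$ and $\mu-1$. The lifting step via $\zenter{\gl{2}{\openF}} \le N$ and the invariance of $\Sigma$ under $N\cap\unip{\alpha}{\openF}$ are both handled correctly.

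\textbf{What each route buys.}
\begin{itemize}
\item The paper's counting uses only the Frobenius-type fixed-point behaviour. It would therefore work unchanged in any permutation group with that pattern, which is why the paper links the lemma to conjugacy of Frobenius complements.
\item Your argument depends on the field structure: the complement acts on the kernel by scalar multiplication, and finiteness makes $\mathbb{F}_{p}[\lambda]$ a field. In return it is constructive, needs only the two elements $\bar h$ and $\bar g$, and shows directly that $\Sigma$ is the additive subgroup $V$ in these coordinates. This is close in spirit to the additive-subgroup manipulations of Lemma~\ref{prel.2} and Section~\ref{proof}.
\end{itemize}

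One small remark: your opening ``assumption'' that $\Sigma$ is computed using elements fixing $\alpha$ is not an extra hypothesis. It is just the definition of the two-point stabilizer $\BAR{N}_{\alpha\beta}$.
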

\begin{proof}
    Let $\Omega = \pl{\openF} \setminus \listset{ \alpha }$ and
    consider the action of $\BAR{N}_{\alpha}$ on $\Omega$.
    Then $\Sigma$ is the union of the nonregular orbits.
    Let $n$ be the number of such orbits and
    set $\BAR{U} = \BAR{N} \cap \BAR{\unip{\alpha}{\openF}} \leq \BAR{N}_{\alpha}$.
    Let $\BAR{g} \in \pgl{2}{\openF}_{\alpha}$.
    Recall that if $\BAR{g} \in \BAR{ \unip{\alpha}{\openF} }\nonid$
    then $\BAR{g}$ is fixed point free on $\Omega$
    whilst if $\BAR{g} \not \in \BAR{ \unip{\alpha}{\openF} }$
    then $\BAR{g}$ has a unique fixed point on $\Omega$.
    The Orbit-Counting Lemma gives \[
        n\card{\BAR{N}_{\alpha}} = \sum_{\BAR{g} \in \BAR{N}_{\alpha}} \card{ \fix{\Sigma}{\BAR{g}} } = %
        \card{\Sigma} + \card{\BAR{U}\nonid} \cdot 0 + ( \card{\BAR{N}_{\alpha}} - \card{\BAR{U}} ) \cdot 1
    \] whence $(n - 1) \card{\BAR{N}_{\alpha}} = \card{\Sigma} - \card{\BAR{U}}$.

    Each nonregular orbit has length at most $\half\card{\BAR{N}_{\alpha}}$
    so $\card{\Sigma} \leq \half n\card{\BAR{N}_{\alpha}}$.
    Consequently $n - 1 < \half n$.
    Then $n = 1$ and $\card{\Sigma} = \card{\BAR{U}}$.
    Since $\BAR{\unip{\alpha}{\openF}}$ is regular on $\Omega$ it follows that
    $\BAR{U}$ is transitive on $\Sigma$.
    Finally, $\zenter{\gl{2}{\openF}} \leq N$ so $N \cap \unip{\alpha}{\openF}$
    maps onto $\BAR{U}$,
    completing the proof.
\end{proof}

\begin{Lemma} \label{prel.4}
    Let $\alpha, \beta \in \pl{\openF}$ be distinct.
    Then there exists $n \in \gl{2}{\openF}_{\alpha\beta}$
    with $\BAR{n} \not= 1$ such that \[
        a^{n} = a^{-1} \qtext{and} b^{n} = b^{-1}
    \]
    for all $a \in \unip{\alpha}{\openF}$ and $b \in \unip{\beta}{\openF}$.
\end{Lemma}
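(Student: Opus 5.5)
Since $\gl{2}{\openF}$ is $2$-transitive on $\pl{\openF}$, and $\unip{\alpha}{\openF}$ transforms naturally under conjugation ($\unip{\alpha g}{\openF} = \unip{\alpha}{\openF}^{g}$), we first reduce to $\alpha = [1:0]$ and $\beta = [0:1]$. Concretely, choose $g \in \gl{2}{\openF}$ with $\alpha g = [1:0]$ and $\beta g = [0:1]$. Suppose $n_{0}$ has the required properties for the pair $[1:0],[0:1]$. Then $n = g n_{0} g^{-1}$ fixes $\alpha$ and $\beta$. Also $\BAR{n} \not= 1$, because $\BAR{n}$ is conjugate to $\BAR{n_{0}}$. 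Finally, for $a \in \unip{\alpha}{\openF}$ we have $a^{g} \in \unip{[1:0]}{\openF}$, and so $a^{n} = ((a^{g})^{n_{0}})^{g^{-1}} = a^{-1}$; the same argument applies to $\unip{\beta}{\openF}$. Here I am using the paper's convention $x^{y} = y^{-1}xy$ together with right actions. The point to check is that the definition of $\unip{\alpha}{\openF}$ is intrinsic (trivial action on $\alpha$ and on $\openF^{2}/\alpha$), so it is carried to $\unip{\alpha g}{\openF}$ by conjugation by $g$.

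In the standard position, Section~\ref{basics} gives
\[
    \unip{[1:0]}{\openF} = \bbset{ \PMatrixC{1 & 0 \\ c & 1} }{ c \in \openF }.
\]
By symmetry, or by the same computation with the roles of the coordinates swapped, $\unip{[0:1]}{\openF}$ consists of the matrices $\PMatrixC{1 & c \\ 0 & 1}$. We take
\[
    n_{0} = \PMatrixC{ -1 & 0 \\ 0 & 1 }.
\]
It is diagonal, so it fixes both $[1:0]$ and $[0:1]$. It is not scalar because $-1 \not= 1$ in odd characteristic, and so $\BAR{n_{0}} \not= 1$.

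A direct computation gives $n_{0}^{-1}\PMatrixC{1&0\\c&1}n_{0} = \PMatrixC{1&0\\-c&1}$ and $n_{0}^{-1}\PMatrixC{1&c\\0&1}n_{0} = \PMatrixC{1&-c\\0&1}$. These are exactly the inverses of the original matrices, as required.

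There is no real obstacle here. The only points needing care are the bookkeeping in the reduction step (the direction of conjugation under the right action) and the use of odd characteristic to guarantee $\BAR{n_{0}} \not= 1$.
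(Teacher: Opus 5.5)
Your proof is correct and is the paper's argument: reduce by $2$-transitivity to $\alpha = [1:0]$, $\beta = [0:1]$ and take $n = \PSmallMatrixC{-1 & 0 \\ 0 & 1}$. The only difference is that you write out what the paper leaves implicit, namely the conjugation bookkeeping in the reduction step and the use of odd characteristic to get $\BAR{n} \not= 1$.
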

\begin{proof}
    Since $\gl{2}{\openF}$ is $2$-transitive on $\pl{\openF}$
    we may suppose that $\alpha = [1:0]$ and $\beta = [0:1]$.
    Let $n = \PSmallMatrixC{-1 & 0 \\ 0 & 1}$.
\end{proof}

\begin{Lemma} \label{prel.5}
    Suppose that $\card{\openF} = 9$ and
    that $l \in \openF$ is a square root of $-1$.
    Let \[
        G = \bblistgen{ \PMatrixC{1 & 0 \\ 1 & 1}, \PMatrixC{1 & l \\ 0 & 1} },
        \quad \alpha = [1:0] \qtext{and} \beta = [0:1].
    \]
    Then $G/\zenter{G} \isom \alt{5},\, \zenter{G} = \listset{\pm 1}$,\,
    $G$ is perfect and\, $\BAR{G}_{\alpha\beta} \not= 1$.
\end{Lemma}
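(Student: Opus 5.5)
We prove Lemma~\ref{prel.5} by an explicit computation inside $\spl{2}{\openF}$, with $\openF = \mathbb{F}_{3}[l]$ and $l^{2} = -1$. Write
\[
    a = \PMatrixC{1 & 0 \\ 1 & 1}, \qquad b = \PMatrixC{1 & l \\ 0 & 1}.
\]
Both have order $3$. The strategy is to find words in $a,b$ exhibiting a presentation of the binary icosahedral group, or of $\alt{5}$ modulo $\listset{\pm 1}$.

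\textbf{Step 1: orders of products.} Compute
\[
    ab = \PMatrixC{1 & l \\ 1 & 1+l}, \qquad \operatorname{tr}(ab) = 2 + l.
\]
An element of $\spl{2}{\openF}$ that is not $\pm 1$ has order determined by its trace $t$: trace $\pm 1$ gives orders $3,6$, trace $0$ gives order $4$, and an element of order $5$ or $10$ has trace a root of $t^{2} \pm t - 1 = 0$. Over $\mathbb{F}_{3}$ the polynomial $t^{2} - t - 1 = t^{2} + 2t + 2$ has discriminant $4 - 8 = -4 \equiv -1$, so its roots are $-1 \pm l$, i.e.\ $2 \pm l$. Hence $ab$ has order $5$ or $10$. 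Also $a^{3} = b^{3} = 1$.

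\textbf{Step 2: identify $\BAR{G}$.} In $\BAR{G}$ the images $\BAR{a}, \BAR{b}, \BAR{a}\BAR{b}$ have orders $3, 3, 5$. I do not expect to use the $(2,3,5)$ von Dyck group directly; rather, I will exhibit an involution. Compute the trace of a suitable commutator or of $ab^{-1}$: $\operatorname{tr}(ab^{-1}) = 2 - l$, which again gives order $5$. So instead look for a word $w$ with $\operatorname{tr}(w) = 0$. For instance $ab^{-1}a$ or $aba^{-1}b$ should work; I will simply test a few short words. Once $x, y \in \BAR{G}$ with $x^{2} = y^{3} = (xy)^{5} = 1$ are found, $\BAR{G}$ is a quotient of the group $\langle x,y \mid x^{2}=y^{3}=(xy)^{5}=1 \rangle \isom \alt{5}$, hence $\BAR{G} \isom \alt{5}$ by simplicity, provided $\BAR{G} = \langle x, y \rangle$ and $\BAR{G} \neq 1$. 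To avoid checking generation, I will instead argue by orders. Since $\BAR{G} \leq \psl{2}{9}$, which has order $360$, and $\BAR{G}$ contains elements of orders $3$ and $5$ and is generated by two elements of order $3$, it is not contained in a Borel subgroup, since $\BAR{a}$ and $\BAR{b}$ fix different points. The proper subgroups of $\alt{6} \isom \psl{2}{9}$ with order divisible by $15$ are exactly the $\alt{5}$'s. And $\BAR{G} \neq \psl{2}{9}$, because $\BAR{G}$ is generated by two elements of order $3$ whose product has order $5$, which makes $\BAR{G}$ a quotient of the $(3,3,5)$ triangle group; that group is isomorphic to $\alt{5}$ only in a weaker sense, so this is the delicate point. \textbf{Main obstacle:} proving $\card{\BAR{G}} = 60$ cleanly. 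The fallback is a direct orbit computation: $\BAR{G}$ acts on $\pl{\openF}$, which has $10$ points. Check that it is transitive and that the stabilizer of $\alpha$ has order $6$, the stabilizer being generated by $\BAR{a}$ together with an element swapping $\alpha$ and $\beta$ suitably. This gives order $60$, so $\BAR{G}$ is simple of order $60$, i.e.\ $\alt{5}$.

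\textbf{Step 3: centre and perfection.} The element $ab$ has order $10$ or $5$. If $(ab)^{5} = -1$ then $-1 \in G$. Since $\operatorname{tr}(ab) = 2 + l$ satisfies $t^{2} - t - 1 = 0$ rather than $t^{2} + t - 1$, I expect order $10$, though I will verify this via the Cayley--Hamilton recursion. Then $G \cap \zenter{\gl{2}{\openF}} = \listset{\pm 1}$. As $G/\listset{\pm 1} \isom \alt{5}$ has trivial centre, $\zenter{G} = \listset{\pm 1}$. For perfection: $G'$ maps onto $\alt{5}$, so $G = G'\listset{\pm 1}$. If $-1 \notin G'$ then $G = G' \times \listset{\pm1}$, and the element $-1$ of order $2$ would be a square-free central factor. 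But in $\spl{2}{\openF}$ the only involution is $-1$, so $G'$ would contain no involution, contradicting the fact that $\card{G'} = 60$ is even. Hence $G' = G$.

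\textbf{Step 4: two-point stabilizer.} We need a nontrivial $\BAR{g} \in \BAR{G}$ fixing $[1:0]$ and $[0:1]$, i.e.\ a diagonal non-scalar element of $G$. Such an element arises from the stabilizer of order $6$ found in Step 2, which contains an element of order $2$ in $\BAR{G}$ fixing $\alpha$ and one more point. Conjugating by an element of $\BAR{G}_{\alpha}$ carries this second fixed point to $\beta$. Explicitly, I expect a word such as $b a^{-1} b$ or similar to be monomial, and its square or a conjugate to be diagonal, e.g.\ $\PSmallMatrixC{l & 0 \\ 0 & -l}$. I will search for this word directly.
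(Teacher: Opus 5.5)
Your proposal has a genuine gap at its central step: it never proves $\card{\BAR{G}}\le 60$, so it never rules out $\BAR{G}=\psl{2}{\openF}$. Steps 1 and 3 are fine. Indeed $\mathrm{tr}(ab)=2+l$ is a root of $t^2-t-1$, so $ab$ has order $10$ and $(ab)^5=-1$, and your involution argument for perfection works.

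In Step 2 your subgroup-order argument only narrows $\BAR{G}$ down to an $\alt{5}$ or all of $\psl{2}{\openF}$. The $(3,3,5)$ remark adds nothing. Since $\tfrac13+\tfrac13+\tfrac15<1$, the group $\langle x,y\mid x^3=y^3=(xy)^5=1\rangle$ is an infinite hyperbolic triangle group, and $\psl{2}{9}$ could perfectly well be a quotient of it.

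The fallback is circular. $\psl{2}{\openF}_{\alpha}$ has order $36$, so proving $\card{\BAR{G}_{\alpha}}=6$ is the same missing upper bound in another guise, and you give no way to prove it. Also, an element interchanging $\alpha$ and $\beta$ does not lie in $\BAR{G}_{\alpha}$.

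Step 4 inherits this problem and adds its own. Conjugating by $\BAR{G}_{\alpha}$ moves the second fixed point of an involution only within its $\BAR{G}_{\alpha}$-orbit. If $\BAR{G}\isom\alt{5}$, the stabilizer has orbits of lengths $3$ and $6$ on the other nine points. So placing $\beta$ in the right orbit is exactly the assertion $\BAR{G}_{\alpha\beta}\neq 1$. ``I will search for a word'' is not a proof.

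The paper avoids all this by quoting an explicit isomorphism $\theta\colon\psl{2}{\openF}\to\alt{6}$ from Wilson. Under $\theta$ the images of the two generators become $(432)$ and $(456)$, so $\BAR{G}\theta$ is visibly the alternating group on $\listset{2,3,4,5,6}$. Then $(32)(56)$ inverts both $3$-cycles. Since $\alpha$ and $\beta$ are the unique fixed points of the two generators, its preimage is a nontrivial element of $\BAR{G}_{\alpha\beta}$.

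Your trace method can also be repaired directly:
\begin{itemize}
\item A cyclic rotation of your candidate $aba^{-1}b$ gives $\mathrm{tr}(a^{-1}bab)=0$.
\item Set $y=\BAR{a}$ and $z=(\BAR{a}\BAR{b})^2$. Then $yz=\BAR{a}^{-1}\BAR{b}\BAR{a}\BAR{b}$, so $y^3=z^5=(yz)^2=1$.
\item Since $\langle z\rangle=\langle \BAR{a}\BAR{b}\rangle$, we have $\langle y,z\rangle=\BAR{G}$.
\item Hence $\BAR{G}$ is a nontrivial quotient of the $(2,3,5)$ von Dyck group, which is $\isom\alt{5}$ of order $60$, so $\BAR{G}\isom\alt{5}$.
\end{itemize}
For the last assertion, the diagonal element $n$ of Lemma~\ref{prel.4} normalizes $G$. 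Also $\BAR{n}\in\psl{2}{\openF}$, because $-1=l^2$ is a square. A subgroup of index $6$ in the simple group $\psl{2}{\openF}$ of order $360$ is self-normalizing, so $1\neq\BAR{n}\in\BAR{G}_{\alpha\beta}$.
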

\begin{proof}
    Let $g = \PSmallMatrixC{1 & 1 \\ 0 & 1}, h = \PSmallMatrixC{1 & l \\ 0 & 1}$,
    $t = \PSmallMatrixC{0 & -1 \\ 1 & 0}$ and
    let $\BAR{g}, \BAR{h}$ and $\BAR{t}$ denote the images of these matrices in $\psl{2}{\openF}$.
    By \cite[p.52]{W} there is an isomorphism $\theta : \psl{2}{\openF} \longrightarrow \alt{6}$
    with $\BAR{g}\theta = (123), \; \BAR{h}\theta = (456)$ and
    $\BAR{t}\theta = (23)(14)$.
    Note that $g^{t} = \PSmallMatrixC{1 & 0 \\ -1 & 1}$.
    Let $\Omega = \listset{2,3,4,5,6}$.
    Then \[
        \BAR{G}\theta = \blistgen{ \BAR{g}^{\BAR{t}}, \BAR{h} }\theta = %
        \blistgen{ (432), (456) } = \alt{\Omega} \isom \alt{5}.
    \]
    The kernel of the map $\spl{2}{\openF} \longrightarrow \pgl{2}{\openF}$ sending $k$ to $\BAR{k}$
    is $\listset{ \pm 1 }$ and this is the unique subgroup of order $2$ in $\spl{2}{\openF}$.
    Since $\alt{5}$ is simple,
    the first three assertions follow.
    Let $n = (32)(56) \in \alt{\Omega}$.
    Now $n$ inverts $(432)$ and $(456)$.
    Since $\alpha$ and $\beta$ are the unique fixed points of $\BAR{g}^{\BAR{t}}$
    and $\BAR{h}$ respectively,
    it follows that $1 \not= n\theta^{-1} \in \BAR{G}_{\alpha\beta}$.
\end{proof}

\section{The proof of \thmDT} \label{proof}
Assume the hypotheses of \thmDT,
set $\alpha = [1:0], \beta = [0:1]$ and
$a_{0} = \PSmallMatrixC{1 & 0 \\ 1 & 1} \in G \cap \unip{\alpha}{\openF}$.
Define $B \leq \unip{\beta}{\openF}$ and $\openE \subseteq \openF$ by \[
    B = G \cap \unip{\beta}{\openF} = \bbset{ \PMatrixC{ 1 & e \\ 0 & 1} }{ e \in \openE }.
\]
Then $l \in \openE$ and $G = \listgen{a_{0},B}$.

Suppose that $\openE$ is a field.
Now $l$ generates $\openF$ so $\openE = \openF$ and
Lemma~\ref{prel.1} implies that $G = \spl{2}{\openF}$.
Then Conclusion (a) holds.
Hence we assume that
\begin{equation} \tag{$1$}
    \mbox{ $\openE$ is not a field}.
\end{equation}
On the other hand,
the definition of matrix multiplication implies that
\begin{equation} \tag{$2$}
    \mbox{ $\openE$ is a subgroup of $\openF\additive$}.
\end{equation}
It is necessary to consider a larger group.
Let \[
    N = \nn{\gl{2}{\openF}}{G}.
\]
The element $n$ of Lemma~\ref{prel.4} inverts the generators for $G$ so
\begin{equation} \tag{$3$}
    \BAR{N}_{\alpha\beta} \not= 1.
\end{equation}
Lemma~\ref{prel.2}, with $\openE_{0} = \listset{0}$,
enables us to choose $e \in \openE \setminus \listset{0}$ with
\begin{equation} \tag{$4$}
    e^{2} \not\in \openE.
\end{equation}
Let $b = \PSmallMatrixC{1 & e \\ 0 & 1} \in B\nonid$ and
$a = \PSmallMatrixC{1 & 0 \\ e^{-1} & 1} \in \unip{\alpha}{\openF}$.
Then $\alpha b = [1:e] = [e^{-1}:1] = \beta a$.
Since $\unip{\alpha}{\openF}$ is regular on $\pl{\openF} \setminus \listset{\alpha}$
it follows that
\begin{equation} \tag{$5$}
    \mbox{ $a$ is the unique element of $\unip{\alpha}{\openF}$ with $\alpha b = \beta a$}.
\end{equation}
Then \[
    \alpha ba^{-1} = \beta \qtext{and} %
    ba^{-1} = \PMatrixC{ 1 & e \\ 0 & 1 } \PMatrixC{ 1 & 0 \\ -e^{-1} & 1 } = %
              \PMatrixC{ 0 & e \\ -e^{-1} & 1 }.
\]
Moreover
\begin{align*}
    a_{0}^{ba^{-1}} &= \PMatrixC{ 1 & -e \\ e^{-1} & 0 } \PMatrixC{ 1 & 0 \\ 1 & 1 } \PMatrixC{ 0 & e \\ e^{-1} & 1} %
    = \PMatrixC{ 1-e & -e \\ e^{-1} & 0} \PMatrixC{ 0 & e \\ -e^{-1} & 1 } \\
    &= \PMatrixC{ 1 & -e^{2} \\ 0 & 1} \in \unip{\beta}{\openF}.
\end{align*}
Now $-e^{2} \not\in \openE$ by $(4)$ and $(2)$ so $a_{0}^{ba^{-1}} \not\in G$.
As $a_{0}^{b} \in G$ we have $a \not\in N$.
If $\BAR{N}_{\alpha \, \alpha b} \not= 1$ then
$(3)$, Lemma~\ref{prel.3} and $(5)$ force $a \in N$.
We deduce that
\begin{equation} \tag{$6$}
    \BAR{N}_{\alpha \, \alpha b} = 1.
\end{equation}

Let $H = \listgen{ a_{0}, a_{0}^{b} } \leq G$.
Note that $a_{0}^{b} \in \unip{\alpha b}{\openF}$.
If $H = G$ then Lemma~\ref{prel.4},
with $\alpha b$ in the role of $\beta$,
implies that $\BAR{N}_{\alpha \, \alpha b} \not= 1$,
contrary to $(6)$.
Thus $H < G$.

Let \[
    L = H^{a^{-1}} = \blistgen{ a_{0}, a_{0}^{ba^{-1}} } = %
    \bblistgen{ \PMatrixC{ 1 & 0 \\ 1 & 1}, \PMatrixC{ 1 & -e^{2} \\ 0 & 1 } }.
\]
Now $\BAR{H}_{\alpha\, \alpha b} \leq \BAR{N}_{\alpha\, \alpha b} = 1$,
$\alpha a^{-1} = \alpha$ and $\alpha ba^{-1} = \beta$ so
\begin{equation} \tag{$7$}
    \BAR{L}_{\alpha \, \beta} = 1.
\end{equation}

Since $\card{L} < \card{G}$ we may use induction to identify $L$.
Lemma~\ref{prel.5} and $(7)$ rule out Conclusion~(b).
Let $\openD$ be the subfield of $\openF$ generated by $-e^{2}$.
Then \[
    L = \spl{2}{\openD}.
\]
We have \[
    L_{\alpha\beta} = \bbset{ \PMatrixC{ d & 0 \\ 0 & d^{-1} } }{ d \in \openD \setminus \listset{0} } \isom \openD\mult
\]
whence $\BAR{L}_{\alpha\beta} \isom \openD\mult / \listset{\pm 1}$.
Recall that $\Char\openF$ is odd.
Then $(7)$ forces $\card{\openD} = 3$ so
$e^{2} \in \openD \setminus \listset{0} = \listset{ \pm 1}$.
If $e^{2} = 1$ then $e \in \listset{ \pm 1 }$ and
$(2)$ forces $1 \in \openE$,
contrary to $e^{2} \not\in \openE$.
We deduce that \[
    e^{2} = -1.
\]

Let $\openE_{0} = \listset{0, e, -e}$.
Now $\card{\openD} = 3$ so $\Char\openF = 3$ and
$\openE_{0}$ is a subgroup of $\openF\additive$.
Note that $e$ and $-e$ are the only square roots of $-1$ in $\openF$.
We conclude that \[
    \mbox{ $f^{2} \in \openE$ for all $f \in \openE \setminus \openE_{0}$ }.
\]
Since $\openE$ is not a field,
Lemma~\ref{prel.2} forces $\openE_{0} = \openE$.
Now $l \in \openE$ so $l \in \listset{ \pm e }$ and $l^{2} = -1$.
Since $l$ generates $\openF$ and $\Char\openF = 3$ we have
$\card{\openF} = 9$.
Then Conclusion~(b) follows from Lemma~\ref{prel.5}
and the proof of Dickson's Theorem is complete. 

\bibliographystyle{amsalpha}

%
%


\end{document}